\newtheorem{theorem}{Theorem}[section]
\newtheorem{proposition}[theorem]{Proposition}
\newtheorem{lemma}[theorem]{Lemma}
\newtheorem{question}[theorem]{Question}
\newtheorem{claim}[]{Claim}
\theoremstyle{definition}
\newtheorem{definition}[theorem]{Definition}
\theoremstyle{remark}
\newtheorem{remark}[theorem]{Remark}
\def\wti{\widetilde}
\def\mb{\mathbb}
\def\area{\mathrm{Area}}
\def\vol{\mathrm{Vol}}
\newcommand{\mc}{\mathcal}
\newcommand{\mf}{\mathbf}
\newcommand{\spt}{\mathrm{spt\,}}
\numberwithin{equation}{section}
\title[Minimal hypersurfaces with non-empty free boundary]{Existence of minimal hypersurfaces with non-empty free boundary for generic metrics}
\date{\today}
\author{Zhichao Wang}
\address{Max-Planck Institute for Mathematics, Vivatsgasse 7, 
53111 Bonn, Germany}
\email{wangzhichaonk@gmail.com}
\begin{document}

\begin{abstract}
For almost all Riemannian metrics (in the $C^\infty$ Baire sense) on a compact manifold with boundary $(M^{n+1},\partial M)$, $3\leq (n + 1)\leq 7$, we prove that, for any open subset $V$ of $\partial M$, there exists a compact, properly embedded free boundary minimal hypersurface intersecting $V$.
\end{abstract}
\maketitle

\section{Introduction}
In 1960s, Almgren \citelist{\cite{Alm62}\cite{Alm65}} initiated a variational theory to find minimal submanifolds in any compact manifolds with boundary. For a closed manifold $M^{n+1}$, the regularity of such hypersurfaces was improved by Pitts \cite{Pit76} for $n\leq 5$, and Schoen-Simon \cite{SS} for $n = 6$. Very recently, Li and Zhou finished this program for a general compact manifold with nonempty boundary in \cite{LZ16}, in which they proved that every compact manifold with boundary admits a smooth compact minimal hypersurface with (possibly empty) free boundary. This result left widely open the following well-known question::
\begin{question}
Does every compact manifold with non-empty boundary admit a minimal hypersurface with non-empty free boundary? 
\end{question}

We point out that there are similar questions in any free boundary variational theory. In particular,  in the mapping approach by Fraser \cite{Fra00}, Lin-Sun-Zhou \cite{LSZ18}, and Lauren-Petradis \cite{LP18}, it was not known whether their free boundary minimal surfaces have nontrivial boundary.

In this paper, we solve this problem in generic scenarios and prove a much stronger property: $M$ admits infinitely many embedded minimal hypersurfaces with non-empty free boundary.
\begin{theorem}\label{thm:intro:main thm}
Let $(M^{n+1},\partial M)$ be a compact manifold of dimension $3\leq (n + 1)\leq 7$. Then for a $C^\infty$-generic Riemannian metric $g$ on $(M,\partial M)$, the union of boundaries of all smooth, embedded, free boundary minimal hypersurfaces is dense in $\partial M$.
\end{theorem}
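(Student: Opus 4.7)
The plan is to adapt the Irie--Marques--Neves proof of generic density of minimal hypersurfaces to the free boundary setting. Let $\mathcal{G}^*(V)\subset\mathrm{Met}^\infty(M)$ denote the set of smooth metrics admitting a \emph{nondegenerate} free boundary minimal hypersurface (FBMH) $\Sigma$ with $\partial\Sigma\cap V\neq\emptyset$; choosing $\{V_i\}_i$ to be a countable basis of $\partial M$, it suffices by Baire category to prove that each $\mathcal{G}^*(V)$ is $C^\infty$-open and dense. The three analytic inputs I will draw on are: (i) the Li--Zhou min--max regularity theorem, realising each $k$-width $\omega_k(M,g)$ (for min--max over relative $\mathbb{Z}_2$-cycles in $\mathcal{Z}_n(M,\partial M;\mathbb{Z}_2)$) as an integer combination of areas of smooth properly embedded FBMHs; (ii) a bumpy metrics / compactness theorem for FBMHs giving a residual set $\mathcal{B}\subset\mathrm{Met}^\infty(M)$ on which every FBMH is nondegenerate and FBMHs of bounded area form a compact family; and (iii) the key input, a \emph{free boundary Weyl law}
\[
\lim_{k\to\infty}\omega_k(M,g)\,k^{-1/(n+1)}=a(n)\,\vol(M,g)^{n/(n+1)}.
\]
Openness of $\mathcal{G}^*(V)$ follows from the implicit function theorem: nondegeneracy lets a witnessing FBMH $\Sigma_0$ be perturbed smoothly to nearby metrics, with boundary meeting the open set $V$ throughout a neighbourhood of $g_0$.

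\textbf{Density (the main step).} It suffices to show that $\mathcal{G}^*(V)$ meets every $C^\infty$-neighbourhood of every $g_0\in\mathcal{B}$. Suppose otherwise: some $g_0\in\mathcal{B}$ has a neighbourhood $\mathcal{U}$ in $\mathcal{B}$ disjoint from $\mathcal{G}^*(V)$; since all FBMHs of metrics in $\mathcal{B}$ are nondegenerate, no FBMH of any $g\in\mathcal{U}$ meets $V$. Fix $x_0\in V$ and a small open set $U\subset M$ centred at $x_0$ with $\overline{U}\cap\partial M\subset V$. By (ii), for each $C>0$ there is a $C^2$-neighbourhood $\mathcal{U}_C\subset\mathcal{U}$ of $g_0$ in which no FBMH of area $\leq C$ meets $\overline{U}$. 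Choose a nonnegative $h\in C_c^\infty(U)$ with $h(x_0)>0$ and set $g_t=(1+th)g_0$; for $t$ small, every FBMH of $g_t$ of area at most $C$ is supported in $M\setminus U$, where $g_t=g_0$, and so is a FBMH of $g_0$ of identical area. Consequently the set of values taken by the Li--Zhou realisation of $\omega_k(M,g_t)$ is a $t$-independent discrete subset of $\mathbb{R}$. By (i) and continuity in $t$, $\omega_k(M,g_t)=\omega_k(M,g_0)$ for all small $t$. However, (iii) yields for each fixed $t>0$
\[
\lim_{k\to\infty}\frac{\omega_k(M,g_t)}{\omega_k(M,g_0)}=\left(\frac{\vol(M,g_t)}{\vol(M,g_0)}\right)^{n/(n+1)}>1,
\]
since $\vol(M,g_t)>\vol(M,g_0)$, contradicting $\omega_k(M,g_t)=\omega_k(M,g_0)$ for $k$ large.

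\textbf{Main obstacle.} The decisive new analytic difficulty is establishing the free boundary Weyl law (iii). One must define the $k$-widths via $k$-sweepouts in $\mathcal{Z}_n(M,\partial M;\mathbb{Z}_2)$, extract enough cohomological sweepouts to obtain good upper bounds, and carry out a Lusternik--Schnirelmann / packing argument for the matching lower bound, all while controlling contributions from neighbourhoods of $\partial M$. The closed-case proof of Liokumovich--Marques--Neves supplies the template, but each geometric estimate must be revisited for half-ball cells meeting $\partial M$; this is where almost all of the real work lies. Ingredients (i) and (ii) follow from the Li--Zhou theory and its recent refinements on bumpy metrics for free boundary minimal hypersurfaces, and the Baire-category assembly at the end is routine once (i)--(iii) and the density step above are in place.
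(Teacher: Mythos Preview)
Your argument contains a genuine gap at the density step, and it is precisely the gap the paper is written to close. From the assumption that no FBMH for any $g\in\mathcal{U}$ has $\partial\Sigma\cap V\neq\emptyset$, you claim via compactness that no FBMH of bounded area \emph{meets $\overline U$}, where $U\subset M$ is an open neighbourhood of $x_0$. This does not follow: a hypersurface $\Sigma$ with $\partial\Sigma\subset\partial M\setminus V$ may still pass through the interior of $U$; compactness only produces a limiting FBMH for $g_0$ that meets $\overline U$, which is no contradiction. Hence your conformal perturbation $g_t=(1+th)g_0$ with $h\in C^\infty_c(U)$ can change the area of such a $\Sigma$, and the identity $\omega_k(M,g_t)=\omega_k(M,g_0)$ is unjustified. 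The direct Irie--Marques--Neves scheme detects hypersurfaces intersecting an open subset of $M$; it does not, as written, detect hypersurfaces whose \emph{boundary} meets a prescribed $n$-dimensional subset of $\partial M$. The paper's introduction makes exactly this point.

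The paper's fix is a different perturbation. One chooses a vector field $X$ supported near $V$ which on $\partial M$ points along the outward normal, and sets $g_t=F_t^*g'$ for the generated flow $F_t$. Then $(M,g_t)$ is isometric to the proper subdomain $(M_t,g')$ with $M_t=F_t(M)\subset M$, and $\partial M_t$ coincides with $\partial M$ away from $V$. Thus any FBMH in $(M,g_t)$ whose boundary misses $V$ corresponds, under the isometry, to a FBMH in $(M_t,g')$ with boundary lying in $\partial M$, and is therefore already a FBMH in $(M,\partial M,g')$; this places its area in a fixed countable set $\mathcal{C}$, and since $\mathrm{Vol}(M_t,g')<\mathrm{Vol}(M,g')$ the Weyl-law/continuity contradiction goes through. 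Incidentally, you also misidentify the main obstacle: the Weyl law of Liokumovich--Marques--Neves is stated and proved for compact manifolds with possibly nonempty boundary, and the paper uses it off the shelf (Theorem~\ref{thm:weyl}). The novelty lies entirely in the diffeomorphism perturbation just described, not in establishing (iii).
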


We remark that a compact manifold with non-negative Ricci curvature and strictly convex boundary has no closed minimal hypersurface by \cite{FL}*{Lemma 2.2}. Therefore, by Marques-Neves \cite{MN17} and Li-Zhou \cite{LZ16}, it is known that there exist infinitely many properly embedded free boundary minimal hypersurfaces in such ambient manifolds.

For a generic metric on $(M,\partial M)$, the author together with Guang, Li and Zhou proved the density of free boundary minimal hypersurfaces in \cite{GLWZ19}*{Theorem 1.3}. Making use of a maximum principle by White \cite{Whi10}, such denseness gives that $M$ contains minimal hypersurfaces with non-empty boundary by merely assuming strict mean convexity at one point of the boundary $\partial M$ for a generic metric; see \cite{GLWZ19}. However, without any topological or curvature assumptions, it is in general very difficult to prevent the free boundary components from degenerating in the limit process (see e.g. \citelist{\cite{ACS17}\cite{GWZ18_2}}). Our theorem in this paper greatly improves this result by dropping off mean convexity assumption at one point.

\vspace{0.5em}
The denseness result in \cite{GLWZ19}*{Theorem 1.3} can be seen as a natural free boundary analog of \cite{IMN17}. The key ingredient of \cite{IMN17} by Irie, Marques and Neves is the Weyl law for the volume spectrum proved by Liokumovich, Marques and Neves in \cite{LMN16}. The volume spectrum of a compact Riemannian manifold with boundary $(M^{n+1}, g)$ is a nondecreasing sequence of numbers $\{\omega_k(M; g) : k\in\mb N\}$ defined variationally by performing a min-max procedure for the area functional over multiparameter sweepouts. The first estimates for these numbers were proven by Gromov \cite{Gro03} in the late 1980s (see also \cite{Gu09}). A direct corollary of Weyl Law they used is that, for $k$ large enough, $\omega_k(M;g)\neq \omega_k(M;g')$ whenever $\vol(M,g)\neq \vol(M,g')$.

Another observation by Irie, Marques and Neves is that such spectrum depends continuously on the metrics of $M$; see \cite{IMN17}*{Lemma 2.1} and \cite{MNS19}*{Lemma 1}. Applying this, they showed that continuous perturbations in an open set must create new minimal hypersurfaces intersecting that set.

\vspace{0.5em}
In this paper, we also borrow the idea from Irie-Marques-Neves \cite{IMN17}. However, the original perturbation would only produce new free boundary minimal hypersurfaces intersecting \emph{an open set}, but not an $n$-dimensional subset, that we need to consider here. To overcome this new challenge, we perturb the metric $g$ around a boundary point in \emph{a special way} so that a hypersurface whose boundary does not intersect the prescribed subset of $\partial M$ can also be regarded a hypersurface in $(M,\partial M,g)$. Recall that Weyl law in \cite{LMN16} gives that for large $k$, $\omega_k$ will change continuously if the volume of $M$ is changed under the perturbation. From these two observations, we are able to prove that such a special perturbation would produce new minimal hypersurfaces with free boundary intersecting the prescribed subset of $\partial M$.

\vspace{0.5em}
We finish the introduction with the idea of the construction of the special perturbation. Making use of the cut-off trick, the unit inward normal vector field of $\partial M$ can be extended to the whole $M$. Also, by multiplying another cut-off function, we can always construct a vector field whose support is close to our prescribed open set of $\partial M$. Such a vector field would give a one-parameter family of diffeomorphisms (not surjective) of $M$. Then the pull back metric given by such family is the desired perturbation since it is isometric to a subset of $M$ with the original metric. We refer to Proposition \ref{prop:free:dense} for more details.

\subsection*{Acknowledgement}
The author would like to thank Prof. Xin Zhou for bringing this problem to us and many helpful discussion.

\section{Preliminaries}
Let $(M^{n+1},g)$ be a smooth compact connected Riemannian manifold with nonempty boundary $\partial M$ and $3\leq (n+1)\leq 7$. Moreover, $M$ can always be embedded to a closed Riemannian manifold $\wti{M}$  which has the same dimension with $M$.  We can also assume that $\wti{M}$ is isometrically embedded in some $\mb{R}^L$ for $L$ large enough. 

\subsection{Geometric measure theory}
We now recall some basic notations in geometric measure theory; see \cite{LZ16}.

We use $\mc{V}_k(M)$ to denote the closure of the space of $k$-dimensional rectifiable varifolds in $\mb{R}^L$ with support contained in $M$. Let $\mc{R}_k(M;\mb Z_2)$ (resp. $\mc{R}_k(\partial M;\mb Z_2)$) be the space of $k$-dimensional modulo two flat chains of finite mass in $\mb{R}^L$ which are supported in $M$ (resp. in $\partial M$). Denote by $\spt T$ the support of $T\in\mc R_k(M;\mb Z_2)$. Given any $T\in \mc{R}_k(M;\mb Z_2)$, denote by $|T|$ and $\|T\|$ the integer rectifiable varifold and the Radon measure in $M$ associated with $T$, respectively. The mass norm and the flat metric on $\mc{R}_k(M;\mb Z_2)$ are denoted by $\mf{M}$ and $\mc{F}$ respectively; see \cite{Fed69}. Set 
\[ Z_k(M,\partial M;\mb Z_2)=\{T\in \mc{R}_k(M;\mb Z_2) : \spt(\partial T)\subset \partial M \}.\]
We say that $T,S\in Z_k(M,\partial M;\mb Z_2)$ are equivalent if $T-S\in \mc{R}_k(\partial M;\mb Z_2)$. We use $\mc{Z}_k(M,\partial M; \mb Z_2)$ to denote the space of all such equivalent classes; see \cite{GLWZ19}*{Section 3} for the equivalence with the formulation using integer rectifiable currents in \cite{LZ16} . 

The flat metric and the mass norm in the space of relative cycles are defined, respectively, as
\begin{gather*}
\mc F(\tau_1,\tau_2)=\inf\{\mc F(T):T\in\tau\},\ \ \ \mf M(\tau )=\inf\{\mf M(T):T\in\tau \}.
\end{gather*}

The connected component of $\mc Z_n(M,\partial M;\mb Z_2)$ containing 0 is weakly equivalent to $\mathbb{RP}^\infty$ by Almgren \cite{Alm62} (see also \cite{LMN16}*{\S 2.5} and \cite{GLWZ19}*{Section 3}). Denote by $\bar\lambda$ the generator of $H^1(\mc Z_n (M,\partial M ;\mb Z_2 );\mb Z_2)=\mb Z_2$.

\subsection{Auxiliary Lemmas}
In this part, we introduce some Lemmas in \citelist{\cite{IMN17}\cite{MNS19}\cite{GLWZ19}}.

Let $X$ be a finite dimensional simplicial complex. A continuous map $\Phi: X\rightarrow \mc Z_n (M,\partial M ; \mb Z_2)$ is called a \emph{$k$-sweepout} if
\[\Phi^*(\bar{\lambda}^k)\neq 0\in H^k(X;\mb Z_2).\]
We denote by $\mc P_k (M)$ the set of all $k$-sweepouts that have \emph{no concentration of mass}, meaning that 
\[\lim_{r\rightarrow 0}\sup\{\mf M(\Phi(x)\cap B_r(p)) : x\in X, p \in M \} = 0.\]

\begin{definition}
The {\em $k$-width of $(M,\partial M;g)$} is defined as
\[ \omega_k(M):=\inf_{\Phi\in\mc P_k (M)}\sup\{\mf M(\Phi(x)) :x\in  \mathrm{dmn}(\Phi)\},\]
where $\mathrm{dmn}(\Phi)$ is the domain of $\Phi$.
\end{definition}

For any compact Riemannian manifold with boundary $(M,\partial M,g)$, the sequence $\{\omega_p(M)\}$ satisfies Weyl Law, which is proven by Liokumovich, Marques and Neves.
\begin{theorem}[Weyl Law for the Volume Spectrum; \cite{LMN16}]
\label{thm:weyl}
There exists a constant $\alpha(n)$ such that, for every compact Riemannian manifold $(M^{n+1},\partial M,g)$ with (possibly empty) boundary, we have 
\[\lim_{k\to \infty}\omega_k(M;g)k^{-\frac{1}{n+1}}=\alpha(n)\vol(M,g)^{\frac{n}{n+1}}.\]
\end{theorem}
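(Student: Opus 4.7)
The plan is to establish matching upper and lower asymptotic bounds, with the constant $\alpha(n)$ identified via an explicit computation on Euclidean cubes. The overall strategy is to reduce the problem to small, almost-Euclidean pieces and use an approximate additivity of the volume spectrum across a tiling of $M$.

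For the \textbf{upper bound}, I would first analyze widths of the unit Euclidean cube $[0,1]^{n+1}$. A standard $1$-sweepout is given by the hyperplanes $\{x_1=t\}$; iterated constructions (products of projective sweepouts, or subdividing the cube into smaller sub-cubes and concatenating their sweepouts) yield $k$-sweepouts with mass bounded by $Ck^{1/(n+1)}$, and optimizing identifies the constant $\alpha(n)$ as the limit $\lim_{k\to\infty} \omega_k([0,1]^{n+1})k^{-1/(n+1)}$. Given $\epsilon>0$, cover $M$ by finitely many small charts $U_i$ on which $g$ is $(1+\epsilon)$-bilipschitz to a Euclidean cube (or half-cube for charts meeting $\partial M$, where sweepout cycles are allowed boundary on $\partial M\cap U_i$). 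Pulling back near-optimal $k_i$-sweepouts from each piece and splicing them into a $k$-sweepout of $M$ with $k=\sum_i k_i$, then allocating the $k_i$ proportionally to $\vol(U_i,g)^{n/(n+1)}$, would give the upper asymptotic bound after sending $\epsilon\to 0$ and then $k\to\infty$.

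For the \textbf{lower bound}, the plan is a Lusternik--Schnirelmann type subadditivity argument. Given $\Phi\in\mc P_k(M)$ with $\sup \mf M(\Phi(\cdot))$ close to $\omega_k(M;g)$, slice $\Phi$ to produce restrictions $\Phi_i$ on each chart $U_i$ and track how $\Phi^*(\bar\lambda^k)\neq 0$ decomposes under the restriction maps $\mc Z_n(M,\partial M;\mb Z_2)\to \mc Z_n(\overline{U_i},\partial U_i;\mb Z_2)$. A cup-length argument should force a splitting $k_1+\dots+k_N\geq k$ with each $\Phi_i$ a $k_i$-sweepout of $U_i$, hence $\sup \mf M(\Phi(\cdot))\geq \sum_i \omega_{k_i}(U_i;g|_{U_i})$. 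Comparing these widths to their Euclidean counterparts via the bilipschitz distortion $1+\epsilon$ and optimizing over the $k_i$ recovers the matching lower bound $\alpha(n)\vol(M,g)^{n/(n+1)}$.

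The \textbf{main obstacle} is twofold: (i) pinning down the sharp constant $\alpha(n)$ requires matching the exact isoperimetric profile of sweepouts in Euclidean space, which is already nontrivial in dimension $n+1\geq 3$; and (ii) the slicing/restriction procedure in the lower bound must preserve enough cohomological content of $\bar\lambda$ to compel the parameter splitting $k=\sum k_i$. The latter is the heart of the argument and requires delicate control of the topology of the relative cycle space under restriction to subdomains, with particular care near $\partial M$ and along the interfaces $\partial U_i$, where relative boundaries can appear or cancel and so must be accounted for without introducing uncontrolled mass.
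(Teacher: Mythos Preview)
The paper does not prove this theorem; it is quoted from \cite{LMN16} as an auxiliary input and no argument is given here. So there is nothing in the present paper to compare your proposal against.

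That said, your outline is close in spirit to the actual Liokumovich--Marques--Neves proof: an upper bound obtained by gluing near-optimal sweepouts of small almost-Euclidean pieces, and a lower bound via a Lusternik--Schnirelmann cup-length inequality that forces a parameter splitting across a decomposition of $M$. Two corrections are worth noting. First, the constant $\alpha(n)$ is not obtained from any explicit isoperimetric computation on the cube; it is \emph{defined} as the limit $\lim_{k\to\infty}\omega_k([0,1]^{n+1})k^{-1/(n+1)}$, and a nontrivial part of the proof is showing that this limit exists (this itself uses the LS inequality on a self-similar subdivision of the cube). So your obstacle (i) is not the real issue. Second, for the lower bound one restricts to domains $\Omega_1,\ldots,\Omega_N\subset M$ with pairwise disjoint interiors rather than to an open cover, and proves an inequality of the form $\omega_{k_1+\cdots+k_N}(M;g)\geq \sum_i \omega_{k_i}(\Omega_i;g)$; working with disjoint interiors is exactly what avoids the uncontrolled interface mass you flag in (ii). With those adjustments your plan matches the known argument.
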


Irie-Marques-Neves \cite{IMN17}*{Lemma 2.1} proved that $\omega_k(M;g)$ depends continuously on metrics. The following is an improved version given by Marques, Neves and Song. 
\begin{lemma}[\citelist{\cite{IMN17}*{Lemma 2.1}\cite{MNS19}*{Lemma 1}}]\label{lem:wk continuous}
Let $g_0$ be a $C^2$ Riemannian metric on $(M,\partial M)$, and let $C_1 < C_2$ be positive constants. Then there exists $K=K( g_0,C_1,C_2) > 0$ such that
\[
\big|p^{-\frac{1}{n+1}}\omega_p (M;g) -p^{-\frac{1}{n+1}}\omega_p (M;g')\big|\leq K \cdot |g- g'|_{g_0} \]
for any $C^2$ metric $g, g' \in \{h ; C_1g_0\leq h \leq C_2 g_0\}$ and any $p\in\mb N$.
\end{lemma}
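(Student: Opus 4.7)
The plan is to reduce the statement to two ingredients: a pointwise comparison of $n$-dimensional masses under two close metrics, and a uniform Gromov--Guth-type upper bound for $\omega_p$ on the bi-Lipschitz class $\{C_1 g_0 \leq h \leq C_2 g_0\}$. Together these will give the claimed $p$-uniform Lipschitz dependence on the metric.

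First I would establish a comparison of $n$-dimensional masses. Fix an $n$-plane $P$ at a point $x\in M$, and note that the ratio of the $g$- and $g'$-Jacobians on $P$ is a smooth function of the restrictions $g|_P$, $g'|_P$, both of which are uniformly comparable to $g_0|_P$ because $g,g'\in\{h:C_1g_0\le h\le C_2 g_0\}$. A Taylor expansion of $\sqrt{\det}$ around $g$ therefore yields a constant $A=A(g_0,C_1,C_2)$ such that, for every $n$-plane $P$ and every $x\in M$,
\[ \bigl|J^n_{g'}(P,x) - J^n_g(P,x)\bigr| \leq A\,|g-g'|_{g_0}\, J^n_g(P,x). \]
Integrating this inequality over the $n$-rectifiable carrier of any $T\in\mathcal{Z}_n(M,\partial M;\mathbb{Z}_2)$ gives
\[ \bigl(1-A|g-g'|_{g_0}\bigr)\mathbf{M}_g(T)\leq \mathbf{M}_{g'}(T)\leq \bigl(1+A|g-g'|_{g_0}\bigr)\mathbf{M}_g(T). \]

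Next I would transfer this to the widths. Since $g$ and $g'$ are bi-Lipschitz equivalent, the associated flat metrics $\mathcal{F}_g,\mathcal{F}_{g'}$ are comparable, and the ``no concentration of mass'' condition is invariant; hence $\mathcal{P}_p(M)$ is the same set of maps, regardless of whether one reads continuity and non-concentration with respect to $g$ or $g'$. Given any $\Phi\in\mathcal{P}_p(M)$, the mass comparison above yields
\[ \sup_{x}\mathbf{M}_{g'}(\Phi(x))\leq (1+A|g-g'|_{g_0})\sup_{x}\mathbf{M}_g(\Phi(x)), \]
and taking the infimum over $\Phi$ gives $\omega_p(M;g')\le (1+A|g-g'|_{g_0})\,\omega_p(M;g)$; the symmetric inequality holds by switching roles. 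Consequently
\[ \bigl|\omega_p(M;g)-\omega_p(M;g')\bigr|\leq A\,|g-g'|_{g_0}\cdot\max\bigl\{\omega_p(M;g),\omega_p(M;g')\bigr\}. \]

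To conclude, I would invoke the Gromov--Guth upper bound $\omega_p(M;g)\le C(M,g)\,p^{1/(n+1)}$, where the constant $C(M,g)$ depends only on a bi-Lipschitz bound for $g$. Thus, for all metrics $h$ in our class, $\omega_p(M;h)\le B\,p^{1/(n+1)}$ with $B=B(g_0,C_1,C_2)$ uniform in $p$. Combining this with the previous inequality and dividing by $p^{1/(n+1)}$ yields
\[ \bigl|p^{-\frac{1}{n+1}}\omega_p(M;g)-p^{-\frac{1}{n+1}}\omega_p(M;g')\bigr|\leq AB\,|g-g'|_{g_0}, \]
which is the desired estimate with $K=AB$. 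The main obstacle, I expect, is not the mass comparison---which is a routine linear-algebra computation---but rather securing the uniform growth bound $\omega_p\le B p^{1/(n+1)}$ across an entire bi-Lipschitz class of metrics; this is where one must appeal to the Gromov--Guth construction (built from a fixed cubical decomposition of $M$) together with the fact that volumes and distances in all the metrics $h$ in question are comparable to those of $g_0$, so that Gromov--Guth sweepouts can be constructed once and for all and their masses controlled uniformly.
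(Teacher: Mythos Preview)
The paper does not supply its own proof of this lemma; it is quoted verbatim from \cite{IMN17}*{Lemma~2.1} and \cite{MNS19}*{Lemma~1}, and the argument you outline is essentially the one given in \cite{MNS19}: a pointwise comparison of $n$-dimensional area elements yields a multiplicative bound $\omega_p(M;g')\le(1+A|g-g'|_{g_0})\,\omega_p(M;g)$, and then the Gromov--Guth sublinear upper bound $\omega_p\le B\,p^{1/(n+1)}$, which is uniform over the bi-Lipschitz class $\{C_1g_0\le h\le C_2g_0\}$, converts this into the stated $p$-uniform Lipschitz estimate. Your identification of the only nontrivial point---that the Gromov--Guth constant can be taken uniform over the class because the sweepouts are built from a fixed combinatorial decomposition whose masses scale with the bi-Lipschitz constants---is exactly right, so your proposal is correct and matches the cited proof.
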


Inspired by Marques-Neves \cite{MN16}, the author with Guang, Li, and Zhou (see \cite{GLWZ19}*{Theorem 2.1}) gave a general index estimate for min-max minimal hypersurfaces with free boundary. Combining with a compactness theorem in \cite{GWZ18_2} by the author and Guang and Zhou, we also proved in \cite{GLWZ19} that the $k$-width is realized by the area (counting mul-
tiplicities) of min-max free boundary minimal hypersurfaces.
\begin{proposition}[\citelist{\cite{IMN17}*{Proposition 2.2}\cite{GLWZ19}*{Proposition 7.3}} ]\label{prop:wk can be realized}
Suppose $3\leq (n + 1)\leq 7$. Then for each $k\in\mb N$, there exist a finite disjoint collection $\{\Sigma_1,...,\Sigma_N\}$ of almost properly embedded free boundary minimal hypersurfaces in $(M,\partial M,g)$, and integers $\{m_1,...,m_N\}\subset\mb N$, such that
\[\omega_k(M;g)=\sum_{j=1}^Nm_j\area_g(\Sigma_j)\ \ \text{ and }\ \ \sum_{j=1}^N \mathrm{index}(\Sigma_j)\leq k.\]
\end{proposition}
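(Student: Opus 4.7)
The plan is to follow the standard variational recipe underlying both \cite{IMN17}*{Proposition 2.2} and \cite{GLWZ19}*{Proposition 7.3}, combining an Almgren--Pitts type min-max existence argument for relative cycles with a Marques--Neves index estimate adapted to the free boundary setting. First, I would choose a minimizing sequence $\{\Phi_i\}\subset \mc P_k(M)$ of $k$-sweepouts with $\sup_{x\in \dmn(\Phi_i)} \mf M(\Phi_i(x))\to \omega_k(M;g)$, and run the free boundary discretization and min-max machinery on this sequence. After a pull-tight procedure ensuring that every varifold in the critical set is stationary with respect to compactly supported ambient deformations tangent to $\partial M$, the output is a stationary integral free boundary varifold $V$ in $(M,g)$ with $\|V\|(M)=\omega_k(M;g)$.

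The next step is to upgrade $V$ to a smooth object. In the dimension range $3\leq n+1\leq 7$, the interior regularity of Pitts and Schoen--Simon, together with the boundary regularity established by Li--Zhou in \cite{LZ16}, forces $\spt\|V\|$ to be the disjoint union of almost properly embedded smooth free boundary minimal hypersurfaces $\Sigma_1,\dots,\Sigma_N$ with positive integer multiplicities $m_1,\dots,m_N$. This immediately gives
\[\omega_k(M;g)=\sum_{j=1}^N m_j\,\area_g(\Sigma_j).\]

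For the index bound I would invoke the free boundary index estimate \cite{GLWZ19}*{Theorem 2.1}, arguing by contradiction: if $\sum_j \mathrm{index}(\Sigma_j) > k$, then following the deformation strategy of \cite{MN16}, one could build an explicit homotopy of $\Phi_i$ (respecting the free boundary tangency condition along $\partial M$) whose maximum mass lies strictly below $\omega_k(M;g)$, contradicting the definition of the $k$-width. The compactness theorem for free boundary minimal hypersurfaces with uniformly bounded area and index from \cite{GWZ18_2} is needed to extract the limit configuration along the minimizing sequence and to guarantee that the index bound survives in the limit.

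The main obstacle I expect, and the reason the free boundary extension is genuinely nontrivial, is controlling the behavior of $\spt\|V\|$ along $\partial M$: one has to rule out pathological tangencies or ``half-regular'' touching of $\partial M$ while simultaneously keeping the deformation theory used for the index estimate consistent with the free boundary condition. Both issues are precisely where the Li--Zhou boundary regularity theorem and the Guang--Wang--Zhou compactness theorem do the heavy lifting, and I would cite those results rather than reprove them.
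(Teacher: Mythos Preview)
Your outline is correct and matches the approach the paper attributes to the cited references: the paper does not supply its own proof of this proposition but quotes it from \cite{IMN17}*{Proposition 2.2} and \cite{GLWZ19}*{Proposition 7.3}, and in the preceding paragraph identifies exactly the ingredients you name (the Marques--Neves-style index estimate \cite{GLWZ19}*{Theorem 2.1} together with the free boundary compactness theorem of \cite{GWZ18_2}, on top of the Li--Zhou regularity \cite{LZ16}). There is nothing to add.
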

\begin{remark}
In a recent exciting work \cite{Zhou19}, X. Zhou proved that, for a bumpy metric on a closed manifold, each $m_j$ equals to 1, which is conjectured by Marques and Neves in \cite{MN16}. Based on this Multiplicity One Theorem, Marques-Neves \cite{MN18} proved that the index is in fact equals to $k$ for min-max minimal hypersurfaces realizing $\omega_k$.
\end{remark}

\section{Proof of Theorem \ref{thm:intro:main thm}}
Let  $(M^{n+1},\partial M)$ be a compact manifold with boundary and $3\leq (n+1)\leq 7$. Let $\mc{M}$ be the space of all smooth Riemannian metrics on $M$, endowed with the smooth topology. Suppose that $V\subset\partial M$ is a non-empty open set.  Let $\mc{M}_V$ be the set of metrics $g\in \mc{M}$ such that there exists a non-degenerate, properly embedded free boundary minimal hypersurface $\Sigma$ in $(M,\partial M, g)$ whose boundary intersects $V$.

We approach the theorem by proving the following proposition.
\begin{proposition}\label{prop:free:dense}
For any compact manifold $(M,\partial M)$ and any open subset $V\subset \partial M$, $\mc{M}_V$ is open and dense in $\mc{M}$ in the smooth topology.
\end{proposition}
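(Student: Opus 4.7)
Openness of $\mc M_V$ is a standard implicit function theorem argument: non-degeneracy of $\Sigma$ means the free boundary Jacobi operator of $\Sigma$ is invertible, so for every $g'$ in a small $C^\infty$-neighborhood of $g$ there is a unique non-degenerate free boundary minimal hypersurface $\Sigma_{g'}$ close to $\Sigma$; since $V\subset\partial M$ is open and $\partial\Sigma_{g'}$ depends $C^1$-continuously on $g'$, the condition $\partial\Sigma_{g'}\cap V\neq\emptyset$ is preserved.

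For density, I plan to implement the Irie-Marques-Neves scheme using the special perturbation outlined in the introduction. Fix $g\in\mc M$ and a $C^\infty$-neighborhood $\mc U$ of $g$; by the free boundary version of White's bumpy metric theorem, there is a bumpy $g_0\in\mc U$, and we may assume $g_0\notin\mc M_V$, so every free boundary minimal hypersurface of $(M,g_0)$ has boundary in $\partial M\setminus V$. Pick a non-empty relatively open $V'\subset V$ of $\partial M$, let $X$ be a smooth vector field on $M$ supported in a small collar of $V'$ and equal on its support to a cutoff of the inward unit normal to $\partial M$, and set $g_s:=\phi_s^*g_0$ for $\phi_s$ the flow of $X$. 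For all sufficiently small $s>0$, $\phi_s:M\to\phi_s(M)\subsetneq M$ is a diffeomorphism onto its image, $g_s\in\mc U$, $(M,g_s)$ is isometric via $\phi_s$ to $(\phi_s(M),g_0)$, and $\vol(M,g_s)$ is strictly decreasing in $s$. The crucial observation is that if $\Sigma^s$ is a free boundary minimal hypersurface in $(M,g_s)$ whose boundary misses $V$, then $\phi_s(\Sigma^s)$ sits in $\phi_s(M)$ with boundary on the part of $\phi_s(\partial M)$ where $\phi_s=\mathrm{id}$, hence extends to a free boundary minimal hypersurface in $(M,g_0)$ of the same area.

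The core of the proof is now by contradiction. Suppose that for all $s\in[0,\varepsilon]$ no free boundary minimal hypersurface in $(M,g_s)$ has boundary meeting $V$. By Proposition \ref{prop:wk can be realized}, for every $k$, $\omega_k(M,g_s)=\sum_j m_j^s\area_{g_s}(\Sigma_j^s)$ with the $\Sigma_j^s$ free boundary minimal in $(M,g_s)$ of total index at most $k$; by the previous observation each $\phi_s(\Sigma_j^s)$ is a free boundary minimal hypersurface of $(M,g_0)$ of the same area. Bumpiness of $g_0$ makes such hypersurfaces with uniformly bounded area and index finite in number, so the right-hand side takes only finitely many values as $s$ varies. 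On the other hand, Lemma \ref{lem:wk continuous} gives continuity of $s\mapsto\omega_k(M,g_s)$, and for $k$ large the Weyl law (Theorem \ref{thm:weyl}) together with the strict volume drop forces $\omega_k(M,g_0)\ne\omega_k(M,g_\varepsilon)$, so by the intermediate value theorem this function takes uncountably many values on $[0,\varepsilon]$, a contradiction. Hence some $g_{s_*}\in\mc U$ admits a free boundary minimal hypersurface whose boundary meets $V$. The hard part will be non-degeneracy: the hypersurface produced by the above min-max machinery may be degenerate in $g_{s_*}$, so to land in $\mc M_V$ proper a further perturbation is required. I plan to handle this by noting that the contradiction above applies verbatim on every subinterval $[a,b]\subset[0,\varepsilon]$, so the ``existence'' parameter set is dense in $[0,\varepsilon]$; combining this abundance of good parameters with the genericity (density) of bumpy metrics in $\mc U$ should produce a bumpy $g'\in\mc U$ still carrying a free boundary minimal hypersurface meeting $V$, which is then automatically non-degenerate and puts $g'$ in $\mc M_V$.
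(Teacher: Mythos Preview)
Your core argument matches the paper almost exactly: openness via the implicit function theorem / White's structure theorem; density via first passing to a bumpy metric $g_0$, building the special one-parameter family $g_s=\phi_s^*g_0$ from a cut-off normal field, the key observation that a FBMH in $(M,g_s)$ whose boundary misses $V$ is (via $\phi_s$) a FBMH in $(M,g_0)$ of the same area, and then the contradiction between continuity of $s\mapsto\omega_k(M,g_s)$ (Lemma~\ref{lem:wk continuous}), the volume change forced by the Weyl law (Theorem~\ref{thm:weyl}), and the countability of admissible area-sums at $g_0$. This is exactly the paper's Claims~1 and~2.

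The gap is your last paragraph. Having found $g_{s_*}$ admitting an (almost properly embedded) FBMH $\Sigma$ with $\partial\Sigma\cap V\neq\emptyset$, you must land in $\mc M_V$, which requires a \emph{non-degenerate, properly embedded} FBMH. Your plan---density of ``good'' parameters in $[0,\varepsilon]$ combined with density of bumpy metrics in $\mc U$---does not work as stated: the family $\{g_s\}_{s\in[0,\varepsilon]}$ is one-dimensional and may miss the residual set of bumpy metrics entirely for $s>0$, and intersecting two dense sets in $\mc U$ gives nothing without further structure. The paper avoids this by citing a direct perturbation lemma (\cite{IMN17}*{Proposition 2.3}, \cite{GLWZ19}*{Proposition 7.6}): given the specific $\Sigma$ in $(M,g_{s_*})$, one perturbs the metric in an arbitrarily $C^\infty$-small way so that $\Sigma$ itself becomes a non-degenerate FBMH (still with $\partial\Sigma\cap V\neq\emptyset$). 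A second targeted perturbation (\cite{GLWZ19}*{Proposition 7.7}) then upgrades ``almost properly embedded'' to ``properly embedded''. You should invoke these two lemmas directly; you also omitted the almost-proper versus proper issue, which matters because Proposition~\ref{prop:wk can be realized} only produces almost properly embedded hypersurfaces.
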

\begin{proof}
Let $g \in\mc M_V$ and $\Sigma$ be like in the statement of the proposition. Following the step by Irie-Marques-Neves in \cite{IMN17}, we first show the openness of $\mc M_V$. Note that $\Sigma$ is a properly embedded, then the Structure Theorem of White \cite{Whi91}*{Theorem 2.1} (see \cite{ACS17}*{Theorem 35} for a version on free boundary minimal hypersurfaces) also gives that for every Riemannian metric $g'$ sufficiently close to $g$, there exists a unique nondegenerate properly embedded free boundary minimal hypersurface $\Sigma'$ close to $\Sigma$. This implies $\mc M_V$ is open.

It remains to show the set $\mc M_V$ is dense. Let $g$ be an arbitrary smooth Riemannian metric on $(M,\partial M)$ and $\mc V$ be an arbitrary neighborhood of $g$ in the
$C^\infty$ topology. By the Bumpy Metrics Theorem (\citelist{\cite{Whi91}*{Theorem 2.1}\cite{ACS17}*{Theorem 9}}), there exists $g'\in\mc V$ such that every compact, almost properly embedded free boundary minimal hypersurface with respect to $g'$ is nondegenerate. 

Since $g'$ is bumpy, then by \cite{GLWZ19}*{Proposition 5.3} (see also \citelist{\cite{GWZ18_2}\cite{Wang19}}), the space of almost embedded free boundary minimal hypersurfaces with $\area\leq \Lambda$ and $\mathrm{index}\leq I$ is countable with respect to $g'$ for all $\Lambda>0$ and $I\geq 0$. Therefore, the set
\[
\mc C:=
\left\{\sum_{j=1}^Nm_j\area_{g'}(\Sigma_j)\,\Bigg|
\begin{array}{lll}
&N\in\mb N, \{m_j\}\subset \mb N,\{\Sigma_j\} \text{ disjoint collection of almost}\\
&\text{ properly embedded free boundary minimal }\\
&\text{ hypersurfaces in $(M,\partial M, g')$}
\end{array} 
\right\}
\]
is countable.

Let $U$ be an open set of $M$ such that $\overline U\cap \partial M\subset V$ is non-empty. Let $X$ be a vector field on $M$ so that $\spt X\subset U$ and for $x\in\partial M$ satisfying $X(x)\neq 0$, $X(x)/|X(x)|$ is the outward unit normal vector of $\partial M$. Denote by $(F_t)_{0\leq t\leq 1}$ a family of diffeomorphisms of $M$ generated by $X$. Set 
\[g_t=F_t^*g' \text{\ \ and\ \ } M_t=F_t(M).\]
Then $(M,\partial M,g_t)$ is isometric to $(M_t,\partial M_t,g')$. Note that we can take $\delta>0$ so that $g_t\in\mc V$ for all $t\in[0.\delta]$.
\begin{claim}\label{claim:mass always in C}
Let $\Gamma$ be an integral varifold in $M$ whose support is a free boundary minimal hypersurface $\Sigma$ (possibly disconnected) in $(M,\partial M,g_t)$. Assuming that $\partial\Sigma\,\cap V=\emptyset$, then $\mf M(\Gamma)\in\mc C$.
\end{claim}
\begin{proof}[Proof of Claim \ref{claim:mass always in C}]
By the definition of $g_t$, $\Sigma$ can be seen as a free boundary minimal hypersurface in $(M_t,\partial M_t,g')$ so that $\partial \Sigma\cap F_t(V)=\emptyset$. Thus, $\Sigma $ is also a free boundary minimal hypersurface in $(M,\partial M,g')$. It follows that $\mf M(\Gamma)\in \mc C$ (counted with multiplicities).
\end{proof}

\begin{claim}\label{claim:existence of almost proper}
There exist $t_1\in[0,\delta]$ and an almost properly embedded free boundary minimal hypersurface $(\Sigma_1,\partial \Sigma_1)\subset (M,\partial M,g_{t_1})$ satisfying $\partial \Sigma\cap V\neq \emptyset$.
\end{claim}
\begin{proof}[Proof of Claim \ref{claim:existence of almost proper}]
Suppose not, then for all $t\in[0,\delta]$, all the almost properly embedded minimal hypersurfaces in $(M_t,\partial M_t,g')$ have no boundaries in $V$. Recall that Proposition \ref{prop:wk can be realized} gives that $\omega_k(M;g_t)$ is realized by the area of such hypersurfaces. Together with Claim \ref{claim:mass always in C}, we conclude that
\[\omega_k(M_t;g')\in \mc C\ \ \text{ for all $t\in[0,\delta]$ and $k\in\mb N$. }\]
On the other hand, the Weyl law (see Theorem \ref{thm:weyl}) implies that $\omega_k(M;g_\delta)<\omega_k(M;g')$ for $k$ large enough. The Lemma \ref{lem:wk continuous} deduces that $\omega_k(M;g_t)$
is continuous, which leads to a contradiction with that $\omega_k(M;g_t)$ lies in a countable set. The proof is finished.
\end{proof}
Thus we have proved that for some $t_1\in[0,\delta]$, there exists an almost properly embedded free boundary minimal hypersurface $(\Sigma_1,\partial \Sigma_1)\subset (M,\partial M;g_{t_1})$ satisfying $\partial \Sigma_1\cap V\neq \emptyset$. Then by \cite{IMN17}*{Proposition 2.3} (see also \citelist{\cite{MNS19}*{Lemma 4}\cite{GLWZ19}*{Proposition 7.6}}), $g_{t_1}$ can be perturbed to $g''\in \mc V$ so that $(M,\partial M,g'')$ contains an almost properly embedded, non-degenerate, free boundary minimal hypersurfaces $\Sigma''$ whose boundary intersects $V$. Finally, \cite{GLWZ19}*{Proposition 7.7} would allow us to perturb $g''$ to  $\wti g\in\mc V$ and $\Sigma''$ is a properly embedded free boundary minimal hypersurface in $(M,\partial M,\wti g)$. This implies that $\wti g\in \mc M_V$ and we are done.
\end{proof}

Now we are ready to prove Theorem \ref{thm:intro:main thm}. The proof is the same with that of \cite{IMN17}*{Main theorem}.
\begin{proof}[Proof of Theorem \ref{thm:intro:main thm}]
Let $\{V_i\}$ be a countable basis of $\partial M$. Since, by Proposition \ref{prop:free:dense}, each $\mc M_{V_i}$ is open and dense in $\mc M$, and hence the set $\bigcap _i \mc M_{V_i}$ is $C^\infty$ Baire-generic in $\mc M$. This finishes the proof.
\end{proof}

\bibliographystyle{amsalpha}
\begin{bibdiv}
\begin{biblist}

\bib{Alm62}{article}{
      author={Almgren, Frederick~Justin, Jr.},
       title={The homotopy groups of the integral cycle groups},
        date={1962},
        ISSN={0040-9383},
     journal={Topology},
      volume={1},
       pages={257\ndash 299},
         url={https://doi.org/10.1016/0040-9383(62)90016-2},
      review={\MR{0146835}},
}

\bib{Alm65}{article}{
      author={Almgren, Frederick~Justin, Jr.},
       title={The theory of varifolds},
        date={1965},
     journal={Mimeographed notes, Princeton},
       pages={l2},
}

\bib{ACS17}{article}{
      author={Ambrozio, Lucas},
      author={Carlotto, Alessandro},
      author={Sharp, Ben},
       title={Compactness analysis for free boundary minimal hypersurfaces},
        date={2018},
        ISSN={0944-2669},
     journal={Calc. Var. Partial Differential Equations},
      volume={57},
      number={1},
       pages={57:22},
         url={https://doi.org/10.1007/s00526-017-1281-y},
      review={\MR{3740402}},
}

\bib{Fed69}{book}{
      author={Federer, Herbert},
       title={Geometric measure theory},
      series={Die Grundlehren der mathematischen Wissenschaften, Band 153},
   publisher={Springer-Verlag New York Inc., New York},
        date={1969},
      review={\MR{0257325}},
}

\bib{FL}{article}{
      author={Fraser, Ailana},
      author={Li, Martin Man-chun},
       title={Compactness of the space of embedded minimal surfaces with free
  boundary in three-manifolds with nonnegative {R}icci curvature and convex
  boundary},
        date={2014},
        ISSN={0022-040X},
     journal={J. Differential Geom.},
      volume={96},
      number={2},
       pages={183\ndash 200},
         url={http://projecteuclid.org/euclid.jdg/1393424916},
      review={\MR{3178438}},
}

\bib{Fra00}{article}{
      author={Fraser, Ailana~M.},
       title={On the free boundary variational problem for minimal disks},
        date={2000},
        ISSN={0010-3640},
     journal={Comm. Pure Appl. Math.},
      volume={53},
      number={8},
       pages={931\ndash 971},
  url={http://dx.doi.org/10.1002/1097-0312(200008)53:8<931::AID-CPA1>3.3.CO;2-0},
      review={\MR{1755947}},
}

\bib{Gro03}{article}{
      author={Gromov, Mikhael},
       title={Isoperimetry of waists and concentration of maps},
        date={2003},
        ISSN={1016-443X},
     journal={Geom. Funct. Anal.},
      volume={13},
      number={1},
       pages={178\ndash 215},
         url={https://doi.org/10.1007/s000390300004},
      review={\MR{1978494}},
}

\bib{GLWZ19}{article}{
      author={Guang, Qiang},
      author={Li, Martin Man-chun},
      author={Wang, Zhichao},
      author={Zhou, Xin},
       title={Min-max theory for free boundary minimal hypersurfaces
  {II}--{G}eneral {M}orse index bounds and applications},
        date={2019},
     journal={arXiv preprint arXiv:1907.12064},
}

\bib{GWZ18_2}{article}{
      author={Guang, Qiang},
      author={Wang, Zhichao},
      author={Zhou, Xin},
       title={Compactness and generic finiteness for free boundary minimal
  hypersurfaces ({I})},
        date={2018},
     journal={arXiv preprint arXiv:1803.01509},
}

\bib{Gu09}{article}{
      author={Guth, Larry},
       title={Minimax problems related to cup powers and {S}teenrod squares},
        date={2009},
        ISSN={1016-443X},
     journal={Geom. Funct. Anal.},
      volume={18},
      number={6},
       pages={1917\ndash 1987},
         url={https://doi.org/10.1007/s00039-009-0710-2},
      review={\MR{2491695}},
}

\bib{IMN17}{article}{
      author={Irie, Kei},
      author={Marques, Fernando},
      author={Neves, Andr\'e},
       title={Density of minimal hypersurfaces for generic metrics},
        date={2018},
        ISSN={0003-486X},
     journal={Ann. of Math. (2)},
      volume={187},
      number={3},
       pages={963\ndash 972},
         url={https://doi.org/10.4007/annals.2018.187.3.8},
      review={\MR{3779962}},
}

\bib{LP18}{article}{
      author={Laurain, Paul},
      author={Petrides, Romain},
       title={Existence of min-max free boundary disks releasing the width of a
  manifold},
        date={2018},
     journal={to appear in Advances in Mathematics, arXiv preprint
  arXiv:1806.04665},
}

\bib{LZ16}{article}{
      author={Li, Martin},
      author={Zhou, Xin},
       title={Min-max theory for free boundary minimal hypersurfaces
  {I}-regularity theory},
        date={2016},
     journal={to appear in JDG, arXiv:1611.02612},
}

\bib{LSZ18}{article}{
      author={Lin, Longzhi},
      author={Sun, Ao},
      author={Zhou, Xin},
       title={Min-max minimal disks with free boundary in {R}iemannian
  manifolds},
        date={2018},
     journal={to appear in Geom. Topol., arXiv preprint arXiv:1806.04664},
}

\bib{LMN16}{article}{
      author={Liokumovich, Yevgeny},
      author={Marques, Fernando~C.},
      author={Neves, Andr\'e},
       title={Weyl law for the volume spectrum},
        date={2018},
        ISSN={0003-486X},
     journal={Ann. of Math. (2)},
      volume={187},
      number={3},
       pages={933\ndash 961},
         url={https://doi.org/10.4007/annals.2018.187.3.7},
      review={\MR{3779961}},
}

\bib{MN16}{article}{
      author={Marques, Fernando~C.},
      author={Neves, Andr\'e},
       title={Morse index and multiplicity of min-max minimal hypersurfaces},
        date={2016},
        ISSN={2168-0930},
     journal={Camb. J. Math.},
      volume={4},
      number={4},
       pages={463\ndash 511},
      review={\MR{3572636}},
}

\bib{MN17}{article}{
      author={Marques, Fernando~C.},
      author={Neves, Andr\'e},
       title={Existence of infinitely many minimal hypersurfaces in positive
  {R}icci curvature},
        date={2017},
        ISSN={0020-9910},
     journal={Invent. Math.},
      volume={209},
      number={2},
       pages={577\ndash 616},
         url={http://dx.doi.org/10.1007/s00222-017-0716-6},
      review={\MR{3674223}},
}

\bib{MN18}{article}{
      author={Marques, Fernando~C.},
      author={Neves, Andr\'e},
       title={Morse index of multiplicity one min-max minimal hypersurfaces},
        date={2018},
     journal={arXiv preprint arXiv:1803.04273},
}

\bib{MNS19}{article}{
      author={Marques, Fernando~C.},
      author={Neves, Andr\'{e}},
      author={Song, Antoine},
       title={Equidistribution of minimal hypersurfaces for generic metrics},
        date={2019},
        ISSN={0020-9910},
     journal={Invent. Math.},
      volume={216},
      number={2},
       pages={421\ndash 443},
         url={https://doi.org/10.1007/s00222-018-00850-5},
      review={\MR{3953507}},
}

\bib{Pit76}{book}{
      author={Pitts, Jon~T.},
       title={Existence and regularity of minimal surfaces on {R}iemannian
  manifolds},
      series={Mathematical Notes},
   publisher={Princeton University Press, Princeton, N.J.; University of Tokyo
  Press, Tokyo},
        date={1981},
      volume={27},
        ISBN={0-691-08290-1},
      review={\MR{626027}},
}

\bib{SS}{article}{
      author={Schoen, Richard},
      author={Simon, Leon},
       title={Regularity of stable minimal hypersurfaces},
        date={1981},
        ISSN={0010-3640},
     journal={Comm. Pure Appl. Math.},
      volume={34},
      number={6},
       pages={741\ndash 797},
         url={http://dx.doi.org/10.1002/cpa.3160340603},
      review={\MR{634285 (82k:49054)}},
}

\bib{Wang19}{article}{
      author={Wang, Zhichao},
       title={Compactness and generic finiteness for free boundary minimal
  hypersurfaces ({II})},
        date={2019},
     journal={arXiv preprint arXiv:1906.08485},
}

\bib{Whi91}{article}{
      author={White, Brian},
       title={The space of minimal submanifolds for varying {R}iemannian
  metrics},
        date={1991},
        ISSN={0022-2518},
     journal={Indiana Univ. Math. J.},
      volume={40},
      number={1},
       pages={161\ndash 200},
         url={http://dx.doi.org/10.1512/iumj.1991.40.40008},
      review={\MR{1101226 (92i:58028)}},
}

\bib{Whi10}{article}{
      author={White, Brian},
       title={The maximum principle for minimal varieties of arbitrary
  codimension},
        date={2010},
        ISSN={1019-8385},
     journal={Comm. Anal. Geom.},
      volume={18},
      number={3},
       pages={421\ndash 432},
         url={https://doi.org/10.4310/CAG.2010.v18.n3.a1},
      review={\MR{2747434}},
}

\bib{Zhou19}{article}{
      author={Zhou, Xin},
       title={On the multiplicity one conjecture in min-max theory},
        date={2019},
     journal={arXiv preprint arXiv:1901.01173},
}

\end{biblist}
\end{bibdiv}
\end{document}